\documentclass[amsart, 12pt]{amsart}
\usepackage{graphicx} 

\usepackage{amssymb,amsfonts,amsthm,amsmath,latexsym}
\usepackage{enumerate, caption, array, esint}

\usepackage{pgfplots}
\pgfplotsset{compat=1.15}
\usepackage{mathrsfs}
\usetikzlibrary{arrows}
\usepackage{subcaption}

\usepackage[margin=1.1in]{geometry}

\makeatletter
\newcommand*{\rom}[1]{\expandafter\@slowromancap\romannumeral #1@}
\makeatother

\theoremstyle{plain}
\newtheorem{thm}{Theorem}[section]
\newtheorem{cor}[thm]{Corollary}

\newtheorem{lem}[thm]{Lemma}
\newtheorem{prop}[thm]{Proposition}

\newtheorem{example}[thm]{Example}

\theoremstyle{definition}
\newtheorem{defn}[thm]{Definition}

\theoremstyle{remark}
\newtheorem{rem}[thm]{Remark}

\numberwithin{equation}{section}

\newcommand{\hide}[1]{{}}

\theoremstyle{plain}

\begin{document}

\title{The Hausdorff dimension of the set  where the Minkowski question mark function has infinite derivative}

\author{M. Pollicott}
%\footnote{Department of Mathematics, Warwick University, CV4 7AL-UK}}
%\affil[1]{Department, University, Address}

\address{Department of Mathematics, Warwick University, Coventy, CV4 7AL-UK}
\email{masdbl@warwick.ac.uk}
%\urladdr{http://homepage.com}

\date{\today}

\maketitle

\section{Introduction}

The famous Minkowski question mark function $Q: [0,1] \to [0,1]$ was defined by Minkowski in 1904
(at the third  ICM in Heidelberg) \cite{Mi}.  In particular,
given the continued fraction expansion of an irrational number
$$
x = \frac{1}{a_1 + \frac{1}{a_2 + \cdots + \frac{1}{a_{n-1} + \cdots }}}
\in (0,1)
$$
the image of $x$ under $Q$ is given by
$$
Q(x) = 2  \sum_{n=1}^\infty \frac{(-1)^{n+1}}{2^{a_1 + \cdots + a_n}}
$$
This definition is due to Denjoy \cite{De}. 
This is an example of a continuous monotone increasing function which has zero derivative almost everywhere. 
Furthermore, at every point the derivative cannot take any other finite value (i.e., every point
has derivative zero or  infinity, or the derivative will not exist).

\begin{figure}[h!]
\centerline{
\includegraphics[width=0.45\textwidth]{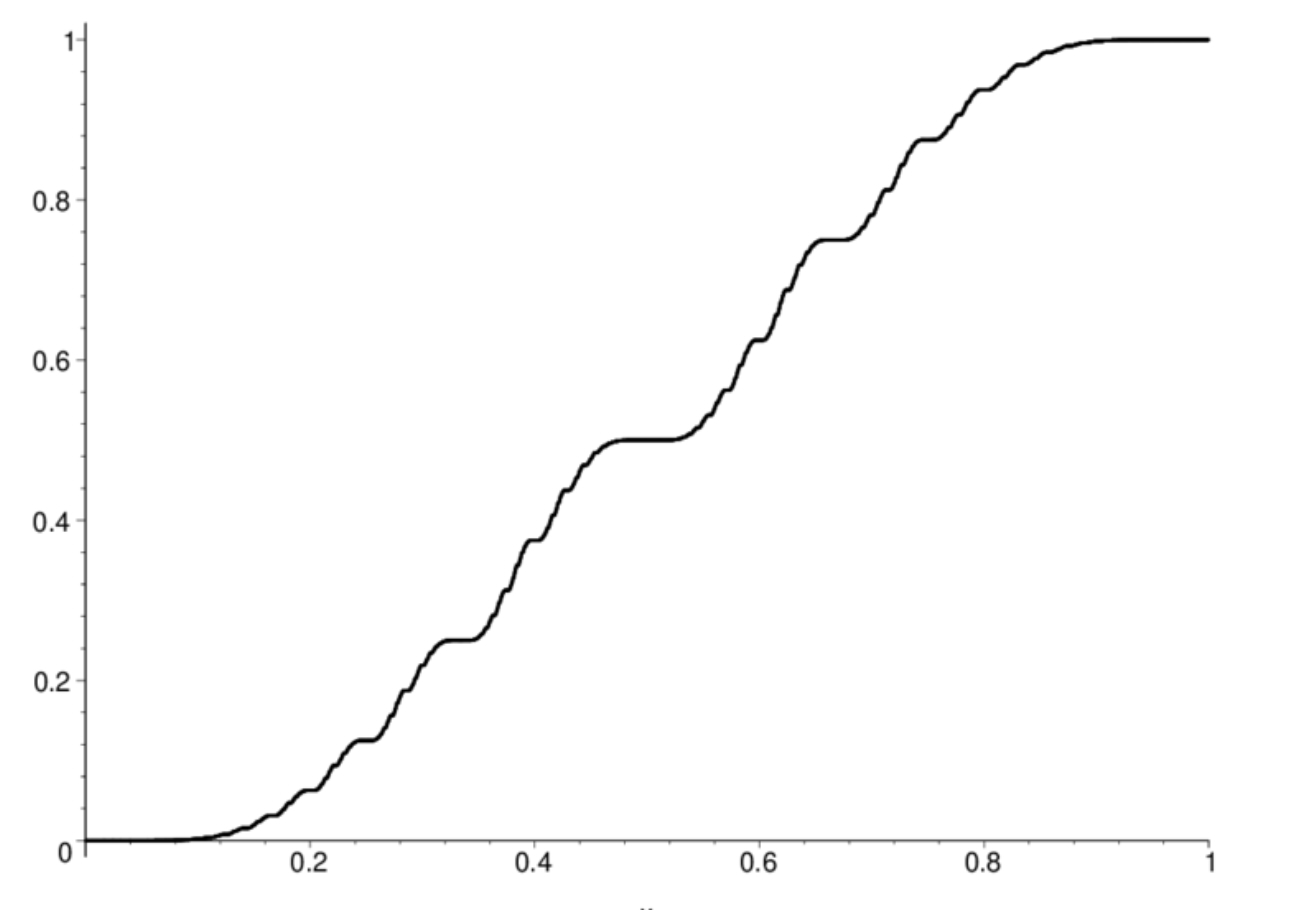} 
\includegraphics[width=0.45\textwidth]{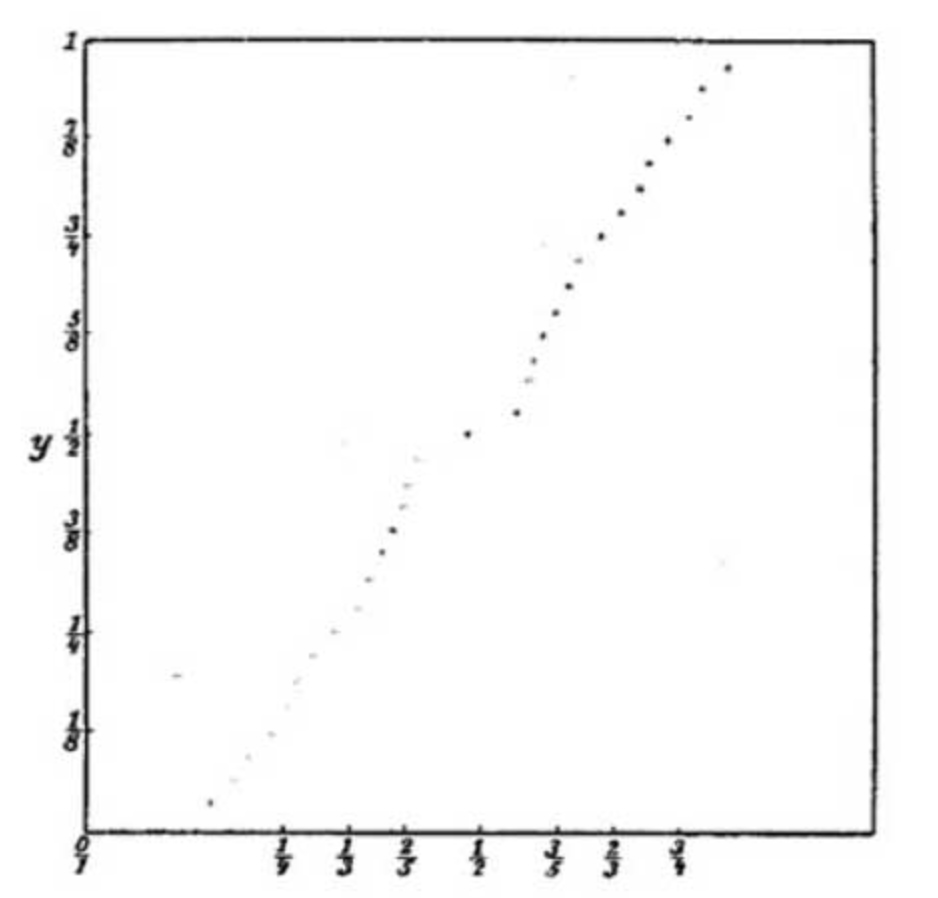} 
}
\caption{(i) The graph of the Minkowski function, (ii) The original plot from Minkowski's 1904 ICM talk}
\end{figure}

We will be interested in what happens on the complement  of this set. 

\begin{defn}
We let 
$\Lambda_\infty \subset [0,1]$ be the set if points for which the derivative exists and has  infinite  derivative 
\end{defn}

  In 
\cite{DKM} there is a characterization of points in $\Lambda_\infty$ in terms of continued fraction expansions.

By the above we know that $\Lambda_\infty$ has zero Lebesgue measure.
However, as was shown by 
Kinney \cite{Ki}
the set of points with infinite derivative has Hausdorff dimension 
$\dim_H(\Lambda_\infty) \in (0,1)$, i.e.,  strictly between between zero and unity. 
Our contribution is the following.

\begin{thm} We can estimate the numerical value of $\dim_H(\Lambda_\infty)$ by:
$$
\dim_H(\Lambda_\infty) = 
0.87471 63051 08211 14221 51529 04219 15975 77579 27289 75153 \ldots
%0.87471 63051 08211 14221 51529 04219 15975 77579 2728 \ldots
%0.874716305108211143577114190907246868457774 \ldots
$$
accurate to the $50$ decimal places given.
\end{thm}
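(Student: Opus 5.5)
Plan: reduce the dimension to the zero of a pressure function for the Gauss map, then evaluate that zero to 50 digits using a transfer-operator determinant that converges super-exponentially.

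\textbf{Step 1: reduce to a pressure equation.} Let $T(x)=1/x \bmod 1$ be the Gauss map, with branches $x\mapsto 1/(n+x)$. The characterization of $\Lambda_\infty$ in \cite{DKM} (the work of Dushistova--Kan--Moshchevitin), together with the multifractal analysis of $Q'$ (Kesseböhmer--Stratmann), shows the following. For a typical point, $Q'(x)=\infty$ exactly when the Lyapunov exponent of $T$ and the exponent $\log 2\cdot\lim \frac1n(a_1+\cdots+a_n)$ of the measure $dQ$ compare in the critical way.

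Precisely, $\dim_H(\Lambda_\infty)$ equals the dimension of the level set where the local dimension of $dQ$ is $\le 1$. By the usual Legendre-transform argument this is $s_0=\delta(\alpha_0)$, where:
\begin{itemize}
\item[(a)] $s_0$ is characterized by the equation $P(-s\log|T'| -t\, a_1\log 2)=0$, with $t=t(s)$ determined so that the boundary case ``local dimension $=1$'' holds;
\item[(b)] equivalently, $s_0$ is the unique $s$ with $P(-s\log|T'|)$ adjusted so that the derivative condition $\frac{\partial}{\partial t}P=0$ is met.
\end{itemize}
Concretely, I would assert the known formula (Kinney/Kesseböhmer--Stratmann) that $\dim_H\Lambda_\infty=\frac{\log 2}{2\chi}$-type expression at a specific equilibrium state. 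It can be recast as the unique zero $s$ of $p(s)=P(-s\log|T'|-\gamma(s) a_1)$, together with a linear condition. I will take this variational reduction as given from the literature and focus on computation.

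\textbf{Step 2: analytic operator.} Introduce the family of transfer operators
$$\mathcal L_{s,t}f(z)=\sum_{n\ge1}\frac{2^{-tn}}{(n+z)^{2s}}f\!\left(\frac1{n+z}\right).$$
These act on holomorphic functions on a disc $D$ around $1$ that is mapped strictly inside itself by every branch. Each such operator is nuclear of order zero, so $\det(I-\lambda\mathcal L_{s,t})$ is entire in $\lambda$. Its expansion
$$\det(I-\lambda\mathcal L_{s,t})=1+\sum_{N\ge1}a_N(s,t)\lambda^N$$
has coefficients obtained from traces. The trace formula is
$$\mathrm{tr}\,\mathcal L^n_{s,t}=\sum_{|\underline i|=n}\frac{2^{-t(i_1+\cdots+i_n)}\,|(T^n)'(x_{\underline i})|^{-s}}{1-(-1)^n\,|(T^n)'(x_{\underline i})|^{-1}},$$
where the $x_{\underline i}$ are the periodic points. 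The coefficients $a_N$ satisfy the super-exponential bound $|a_N|\le C\theta^{N^2}$. The pressure condition $e^{P}=1$ then becomes the equation $\det(I-\mathcal L_{s,t})=0$. The auxiliary derivative condition becomes the vanishing of the corresponding partial derivative of the determinant, which is differentiable termwise.

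\textbf{Step 3: computation and certification.} Truncate at $N\approx 20$--$25$. Solve the resulting two equations in $(s,t)$ by Newton's method, and use the bound on the tail $\sum_{N>M}|a_N|$ together with the implicit function theorem to certify all 50 digits.

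\textbf{Main obstacle.} The sums over periodic orbits run over infinitely many symbols $n$, so the traces cannot be computed by finite enumeration. I would handle this by summing the tail in $n$ with Hurwitz zeta-type asymptotic (Euler--Maclaurin) expansions, which converge rapidly thanks to the weight $2^{-tn}$. The other delicate point is obtaining rigorous, explicit constants $C,\theta$ for the disc $D$, good enough to guarantee $10^{-50}$ accuracy. For this I would use Jenkinson--Pollicott style Hilbert-space bounds on the singular values of $\mathcal L_{s,t}$.
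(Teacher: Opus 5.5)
There is a genuine gap, and it sits in Step~1, which is where the whole identification of the constant takes place. The two characterizations you offer describe \emph{different} numbers, and Steps~2--3 are set up to compute the one that is not the stated constant.

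The constant $0.87471\,63051\ldots$ is $\log 2/\lambda(\mu)$, the dimension of the single measure $\mu=dQ$. The paper takes this identification from the proposition in Section~2, and afterwards only estimates the one number $\lambda(\mu)=-\frac{d}{dt}\log\rho(t)|_{t=0}$, via Propositions~\ref{distributionbd} and~\ref{functionbd}. Your system, by contrast, is a zero of $P(-s\log|T'|-t\,a_1\log 2)$ together with the ``local dimension $=1$'' derivative condition, solved by Newton in $(s,t)$. That is the Legendre-transform formula for the dimension of the level set on which $dQ$ has local dimension $1$, i.e.\ of $\{x:\lambda_F(x)=\log 2\}$, where $\lambda_F(x)=\lim\frac1n\log|(F^n)'(x)|$.

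That number is strictly larger than $\log 2/\lambda(\mu)$. Write $p(t)=\log\rho(t)+\log 2$ for the Farey pressure; the level-set dimension equals $\frac1{\log 2}\inf_t\bigl(p(t)+t\log 2\bigr)$. We have $p\ge 0$ (because of the neutral fixed point at $0$), and $p(t)\ge\log 2-\lambda(\mu)t$ by Lemma~\ref{rho}. Hence $p(t)+t\log 2\ge\max\{t\log 2,\ \log 2-t(\lambda(\mu)-\log 2)\}\ge(\log 2)^2/\lambda(\mu)$. The last bound is sharp only at $t=\log 2/\lambda(\mu)<1$, where $p>0$, so the inequality is strict for every $t$; since the infimum is attained, it exceeds $(\log 2)^2/\lambda(\mu)$. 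This strict inequality is also proved by Kesseb\"ohmer--Stratmann.

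So your Newton iteration would converge to a different value and could not certify the stated digits. Also, $\chi$ is never defined: $\log 2/(2\chi)$ is Kinney's formula only for $\chi=\int_0^1\log(1+x)\,dQ(x)$. In terms of the Gauss exponent $\chi_G=\int\log|T'|\,dQ=2\lambda(\mu)$, the constant is $2\log 2/\chi_G$.

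To obtain the stated number you must commit, as the paper does, to $\log 2/\lambda(\mu)$. Your determinant machinery then needs no root-finding at all. Indeed $P(0,1)=\log\sum_n2^{-n}=0$ with equilibrium state $dQ$, while $\partial_sP(0,1)=-\chi_G$ and $\partial_tP(0,1)=-\log 2\int a_1\,dQ=-2\log 2$. So the constant is $\partial_tP/\partial_sP$ at $(0,1)$. This equals $\partial_tD/\partial_sD$ there for $D(s,t)=\det(I-\mathcal L_{s,t})$, because $D=(1-e^{P})G$ with $G(0,1)\neq0$.

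That would be a legitimate alternative to the paper's argument, but you would still have to control the infinite-alphabet periodic-orbit sums you flag. The paper avoids them entirely by using the two-branch Farey operator: it traps the derivative at $t=0$ between $|\log\rho(\epsilon)|/\epsilon$ and $\log\rho(-\epsilon)/\epsilon$ using explicit positive functions $w_\pm$.

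Finally, be aware that Kesseb\"ohmer--Stratmann also prove that your level-set quantity equals $\dim_H(\Lambda_\infty)$. Together with the strict inequality above, this conflicts with the identification $\dim_H(\Lambda_\infty)=\log 2/\lambda(\mu)$ in the paper's Section~2 proposition, so that step of the paper itself deserves scrutiny. Whichever identification one accepts, only the $\log 2/\lambda(\mu)$ route yields the digits in the statement.
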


\begin{rem}
Kesseboemer and Stratmann  \cite{KS} show the Hausdorff dimension of the set of points where the derivative does not exist (nor is infinity) has the same value for its dimension as $\Lambda_\infty$.
\end{rem}

In the literature there is a estimate accurate to  $13$ decimal places by Mantica \cite{Ma}
and an estimate accurate to  $35$ decimal places  by  Alkauskas \cite{Al}.  
Other early estimates include those by Lagarias (who placed the value in the range $[0.8746, 0.8749]$ \cite{La}) 
and Paradis, Viader and Bibiloni \cite{PVB} (who according to \cite{Al} revised their estimates 
to put  the value in the range $[0.874716, 0.874719]$).
Finally, there was an approximate estimate $\dim_H(\Lambda_\infty) \approx 0.875$ in \cite{TU}.

The main purpose of this note is to illustrate a simple alternative  method to get accurate and rigorous estimates. 

\section{A formula $\dim_H(\Lambda_\infty)$ }
In order to estimate $\dim_H(\Lambda_\infty)$  want  to express this quantity in terms of dynamically defined quantities.
We begin with the following classical map on the unit interval.

\begin{defn}
We define the \emph{Farey map} $F:[0,1] \to [0,1]$ by 
$$
F(x) = 
\begin{cases}
\frac{1}{1-x} &\hbox{ if } 0 \leq x \leq \frac{1}{2}\\
\frac{1-x}{x} &\hbox{ if } \frac{1}{2} \leq x \leq 1
\end{cases}
$$
\end{defn}

\begin{figure}[h!]
\centerline{
\includegraphics[width=0.5\textwidth]{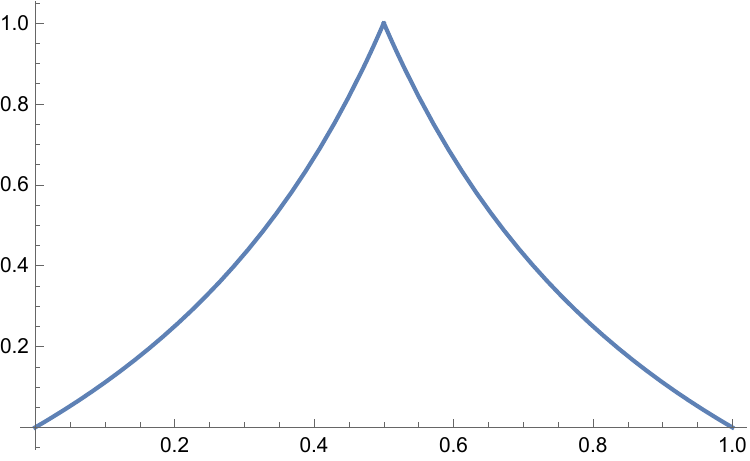} 
}
\caption{The graph of the Farey map}
\end{figure}

We let $\mu$ denote the measure of maximal entropy (or Parry measure) for $F$.  
%This corresponds to the $(\frac{1}{2}, \frac{1}{2})$-Bernoulli measure associated to the usual coding.  Alternatively, 
(Since Farey map is topologically conjugate to the usual tent map on the unit interval and then $\mu$ is the image of the Lebesgue under this conjugating map, which is the question mark function).
Associated to this measure is its entropy which is precisely $\log 2$.

  A second useful numerical quantity associated to this measure is the following:

\begin{defn}
We define the \emph{Lyapunov exponent} $\lambda(\mu)$ for the measure $\mu$ by
$$
\lambda(\mu) = \int \log |F'(x)| d\mu(x)
\left(
= 2\int_0^1 \log(1+x) d\mu(x)
\right)
$$
\end{defn}

Often in the dynamical approach to the Hausdorff dimension of appropriate sets  the value of the Hausdorff dimension 
turns out to be the ratio of the entropy to the Lyapunov exponent. 
Kinney \cite{Ki} and  Kesseboemer-Stratmann \cite{KS}
showed that this was indeed the case  for $\dim_{H}(\Lambda_\infty)$.

\begin{prop}[\cite{Ki},  \cite{KS}]
One can write 
$$\dim_{H}(\Lambda_\infty) = \frac{\log 2}{\lambda(\mu)}.$$
\end{prop}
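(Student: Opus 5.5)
We prove the two inequalities $\dim_H(\Lambda_\infty)\ge \log 2/\lambda(\mu)$ and $\dim_H(\Lambda_\infty)\le \log 2/\lambda(\mu)$ separately. Both rest on the link between the derivative of $Q$ and the Farey map. Write $x$ via its Farey (Stern--Brocot) coding $x=[\epsilon_1\epsilon_2\ldots]$. Let $I_n(x)$ be the $n$-th level Farey cylinder containing $x$. Then $Q$ maps $I_n(x)$ onto a dyadic interval of length $2^{-n}$. By the bounded distortion of the branches of $F^n$ (away from the parabolic point, with the standard care near $0$), $|I_n(x)|\asymp |(F^n)'(x)|^{-1}$. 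Hence the difference quotients of $Q$ over $I_n(x)$ are comparable to
$$\frac{2^{-n}}{|I_n(x)|}\asymp \exp\Big(\sum_{k=0}^{n-1}\big(\log|F'(F^kx)|-\log 2\big)\Big).$$
So, up to the usual Stern--Brocot neighbour adjustments (the characterization in \cite{DKM}), $Q'(x)=\infty$ exactly when the Birkhoff sums $S_n\phi(x)$, with $\phi=\log|F'|$, satisfy $S_n\phi(x)-n\log 2\to+\infty$. In continued fraction terms, $x\in\Lambda_\infty$ roughly when $\liminf \frac1n S_n\phi(x)\ge\log 2$, with the borderline handled by \cite{DKM}.

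For the lower bound, we view $\mu$ as the $(\tfrac12,\tfrac12)$-Bernoulli measure on the Farey coding. Its local dimension is $\log 2/\lambda(\mu)$ $\mu$-a.e., by Birkhoff's theorem applied to $\phi$ together with $\mu(I_n)=2^{-n}$. The key observation is that $\lambda(\mu)>\log 2$, which we verify by comparing with the Lebesgue-typical behaviour; equivalently, $\mu$ is not the absolutely continuous measure. It follows that $\mu$-typical points have $S_n\phi-n\log 2\sim n(\lambda(\mu)-\log 2)\to\infty$, so $\mu(\Lambda_\infty)=1$. The mass distribution principle then gives $\dim_H\Lambda_\infty\ge \log 2/\lambda(\mu)$.

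For the upper bound, we use the multifractal/pressure framework for the Farey map. Consider the pressure function $P(t)=P(-t\phi)$, which is finite and convex for $t$ near $1$ and has a phase transition at $t=1$ because of the indifferent fixed point. Points with $\liminf\frac1n S_n\phi\ge\alpha$ lie in a level set whose dimension is bounded, by the standard covering argument using cylinders $I_n$, by $\sup\{h(\nu)/\lambda(\nu):\lambda(\nu)\ge\alpha\}$. For $\alpha$ slightly above $\log 2$ this equals the Legendre-transform value $\inf_t (t\alpha+P(-t\phi))/\alpha$. We then show that the supremum of $h(\nu)/\lambda(\nu)$ over invariant $\nu$ with $\lambda(\nu)\ge\log 2$ is attained at $\nu=\mu$. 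The point is that $h(\nu)\le\log 2$ for every $\nu$, since the Farey map is a full 2-branch shift with topological entropy $\log 2$. The difficulty is that $\lambda(\nu)$ may be smaller than $\lambda(\mu)$ while still at least $\log 2$. So we instead use the constraint in the form $h(\nu)\le \log 2$ and optimise on the condition $h(\nu)-\lambda(\nu)\cdot s\ge 0$.

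The main obstacle is precisely this optimisation. The cleanest route is to follow Kesseb\"ohmer--Stratmann and define $\beta(\alpha)$, the dimension of $\{x: \lim S_n\phi/n\cdot\ldots\}$, via the Legendre transform of $t\mapsto P(-t\phi)$. We then observe that the level set relevant to $\Lambda_\infty$ is the one where the ratio $\log 2/\lambda$ of dyadic to Euclidean scale equals $\mathrm{slope}$, and check that the extremal measure is $\mu$ (the equilibrium state for the constant potential $-\log 2$ on the coding). Handling the non-uniform hyperbolicity near the parabolic point $0$ — via the induced Gauss map, whose potential $-t\log|G'|-s\,a_1\log 2$ has finite pressure — is where we expect most of the technical work.
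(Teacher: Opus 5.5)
There is a genuine gap in your upper bound. The paper gives no argument of its own here beyond citing Kinney and Kesseb\"ohmer--Stratmann, so your attempt has to stand on its own.

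Your lower bound is fine in outline, and it is essentially Kinney's theorem. $\mu$-typical points have Lyapunov exponent $\lambda(\mu)>\log 2$, so $\mu(\Lambda_\infty)=1$, and the mass distribution principle gives $\dim_H\Lambda_\infty\ge \log 2/\lambda(\mu)=\dim_H\mu$. Stern--Brocot quotients alone do not control $Q'$ near large partial quotients. For $\mu$, though, you can instead use the classical fact that a continuous singular monotone function has derivative $+\infty$ almost everywhere with respect to its own Stieltjes measure.

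The upper bound fails at exactly the step you flag, and the failure is not technical: the claim that $\sup\{h(\nu)/\lambda(\nu):\lambda(\nu)\ge\log 2\}$ is attained at $\nu=\mu$ is false.

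Write $P(t)=P(-t\log|F'|)$; in the paper's notation $P(t)=\log 2+\log\rho(t)$. Let $\nu_t$ be the equilibrium state for $t$ near $0$, so $\lambda(\nu_t)=-P'(t)$ and $h(\nu_t)=P(t)-tP'(t)$. Then $R(t):=h(\nu_t)/\lambda(\nu_t)=t-P(t)/P'(t)$ satisfies
\[
R'(t)=\frac{P(t)P''(t)}{P'(t)^2},\qquad R'(0)=\frac{\log 2\,P''(0)}{\lambda(\mu)^2}>0 .
\]
Here $P''(0)>0$ because $\log|F'|$ is not cohomologous to a constant: its averages over the fixed points $0$ and $\frac{\sqrt5-1}{2}$ are $0$ and $2\log\frac{1+\sqrt5}{2}$.

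For small $t>0$ you still have $\lambda(\nu_t)>\log 2$, so $\nu_t$ is admissible and strictly beats $\mu$. Equivalently, run your own lower-bound argument with $\nu_t$ in place of $\mu$: typical points have $S_n\phi-n\log 2\sim n(\lambda(\nu_t)-\log 2)\to\infty$, which Kesseb\"ohmer--Stratmann make rigorous. This gives $\dim_H\Lambda_\infty\ge R(t)>\log 2/\lambda(\mu)$.

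The constrained optimum is at $t^*\in(0,1)$ with $P'(t^*)=-\log 2$, i.e.\ on the level set of Lyapunov exponent $\log 2$, not at $t=0$. Set $\tau(\alpha)=\frac1\alpha\inf_t(P(t)+t\alpha)$, and let $t_\alpha$ be defined by $P'(t_\alpha)=-\alpha$. For $\log 2\le\alpha\le\lambda(\mu)$ one has $t_\alpha\in[0,1)$ and $P(t_\alpha)>0$, so $\tau'(\alpha)=-P(t_\alpha)/\alpha^2<0$. Hence
\[
\dim_H\Lambda_\infty=\tau(\log 2)>\tau(\lambda(\mu))=\log 2/\lambda(\mu).
\]
This strict inequality $\dim_H\mu<\dim_H\Lambda_\infty$ is precisely the main theorem of Kesseb\"ohmer--Stratmann.

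So $\log 2/\lambda(\mu)$ is Kinney's dimension of the measure $\mu$, and only $\dim_H\Lambda_\infty\ge\log 2/\lambda(\mu)$ holds. No work on inducing, the parabolic point, or the DKM borderline can close the upper bound, because the equality you are aiming at is not true.
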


In particular, this result translates the problem of estimating $\dim_{H}(\Lambda_\infty)$ into the problem of estimating 
$\lambda(\mu)$.

\section{Approach to estimating $\lambda(\mu)$}
We can now introduce a family of bounded   linear operators on the Banach space $C([0,1])$ of continuous functions on the unit interval with the usual supremum norm 
$$\|w\|_\infty = \sup_{0 \leq x \leq 1}|w(x)|.$$

\begin{defn}
Let $t \in \mathbb R$.
We define $\mathcal L_t: C([0,1]) \to C([0,1])$ by 
$$
\mathcal L_t w(x) = \frac{1}{2(1+x)^{2t}} \left( w \left( \frac{x}{1+x}\right) + w \left( \frac{1}{1+x}\right) \right) 
$$
where $w\in C([0,1])$.
\end{defn}
In particular, the  functions $T_1: x \mapsto \frac{x}{1+x}$ and $T_2: x \mapsto \frac{1}{1+x}$ arise as inverse branches to the usual Farey map.  
We can denote the \emph{spectral radius} of $\mathcal L_t$ by using the standard spectral radius formula:
\footnote{For simplicity, we formulate this for  $\mathcal L_t$ acting on continuous functions. However, in the proofs one uses the operator acting on $C^1$ functions and the fact that the spectral radii are the same for both Banach spaces.}
$$
\rho(t) := \lim_{n \to +\infty} \|\mathcal L_t^n\|_\infty^{1/n}.
$$
\begin{example}
For example,  when $t=0$ then $\mathcal L_0$ preserves the constant functions are eigenfunctions for the maximal eigenvalue  $1$, which is also the spectral radius, i.e.,  $\rho(0)=1$.
\end{example}

We can relate the spectral radius  $\rho(t)$ of $\mathcal L_t$  to the value $\lambda(\mu)$ by the following bounds.
\begin{prop}\label{distributionbd}
For $\epsilon > 0$ we  can bound 
$$
\frac{|\log \rho({\epsilon})|}{\epsilon}
\leq
\lambda(\mu)
\leq
 \frac{\log \rho({-\epsilon})}{\epsilon}
 \eqno(1)
$$
\end{prop}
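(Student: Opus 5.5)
The plan is to prove the single inequality
$$\log \rho(t) \geq -t\,\lambda(\mu) \qquad \hbox{for every } t \in \mathbb R.$$
This is the variational inequality for the pressure $P(t)=\log\rho(t)$ of the potential $-\log 2 - t\log|F'|$, tested at the measure $\mu$, whose entropy is $\log 2$. Both bounds in (1) follow from it.

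\begin{itemize}
\item Taking $t=-\epsilon$ gives $\log\rho(-\epsilon)\ge \epsilon\lambda(\mu)$, which is the upper bound.
\item Taking $t=\epsilon$ gives $\log\rho(\epsilon)\ge -\epsilon\lambda(\mu)$. Since $(1+x)^{-2\epsilon}\le 1$, we have $\mathcal L_\epsilon 1\le 1$ and hence $\|\mathcal L_\epsilon^n\|_\infty = \|\mathcal L_\epsilon^n 1\|_\infty\le 1$ by positivity. So $\log\rho(\epsilon)\le 0$, and therefore $|\log\rho(\epsilon)| = -\log\rho(\epsilon)\le \epsilon\lambda(\mu)$, which is the lower bound.
\end{itemize}

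I deliberately avoid differentiating $P$ at $t=0$. The Farey map has a neutral fixed point at $0$, since $F'(0)=1$, so spectral gap and analyticity of $\rho(t)$ near $0$ are delicate. The convexity/variational route needs none of this.

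To prove the inequality, iterate the operator. Writing $T_{\underline i}=T_{i_n}\circ\cdots\circ T_{i_1}$ over the $2^n$ words $\underline i$, we have $|T_j'(x)|=(1+x)^{-2}$, and so
$$\mathcal L_t^n 1(x) = 2^{-n}\sum_{\underline i} |T_{\underline i}'(x)|^{t}.$$
Positivity gives $\|\mathcal L_t^n\|_\infty \ge \|\mathcal L_t^n 1\|_\infty \ge \int \mathcal L_t^n 1\, d\mu$. Jensen's inequality, applied first to the uniform average over words and then to the integral against $\mu$, gives
$$\log \int \mathcal L_t^n 1\, d\mu \;\ge\; \int \log \mathcal L_t^n 1 \,d\mu \;\ge\; t\int 2^{-n}\sum_{\underline i}\log|T_{\underline i}'(x)|\,d\mu(x).$$
By the chain rule, $\log|T_{\underline i}'(x)| = -S_n(\log|F'|)(T_{\underline i}x)$, where $S_n g = \sum_{k<n} g\circ F^k$. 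The right-hand side therefore equals $-t\int \mathcal L_0^n(S_n\log|F'|)\,d\mu$.

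The key step is that $\mu$ is a fixed point of the dual operator: $\mathcal L_0^*\mu=\mu$, i.e. $\int \mathcal L_0 w\,d\mu = \int w\,d\mu$. I would verify this from the description of $\mu$ as the image of Lebesgue measure under the conjugacy $Q$ with the tent map. Under this conjugacy the two inverse branches $T_1,T_2$ of $F$ correspond to the two inverse branches of the tent map, which each halve Lebesgue measure. Hence $\mu(T_j A)=\tfrac12\mu(A)$, and this is exactly $\mathcal L_0^*\mu=\mu$. Combining this with $F$-invariance of $\mu$ gives
$$\int \mathcal L_0^n(S_n\log|F'|)\,d\mu=\int S_n\log|F'|\,d\mu = n\lambda(\mu).$$
Hence $\|\mathcal L_t^n\|_\infty\ge e^{-tn\lambda(\mu)}$. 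Taking $n$-th roots and letting $n\to\infty$ yields the claimed inequality.

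The main obstacle is justifying this integral identity. One must check the $\mathcal L_0^*$-invariance of $\mu$ carefully through the conjugacy. One must also check that $\log|F'| = 2\log(1+x)$ or $2\log(1/x)$ is $\mu$-integrable: it is unbounded near $x=0$ on the second branch, but $\mu$ gives exponentially small mass, of order $2^{-k}$, to the cylinders near $0$, so the integral converges.
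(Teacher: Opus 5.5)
Your proof is correct, and it takes a different route from the paper. Both arguments rest on the tangent-line inequality $\log\rho(t)\ge -t\lambda(\mu)$ at $t=0$, and both then specialise it to $t=\pm\epsilon$ in the same way.

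The paper derives this inequality from Lemma \ref{rho}. That lemma needs analyticity of $\rho(t)$, which comes from the spectral gap of Lemma \ref{spectrum} and Kato's perturbation theory. It also needs the derivative formula $\frac{d}{dt}\log\rho(t)|_{t=0}=-\lambda(\mu)$, using $h_0=1$ and $\mu_0=\mu$, and convexity via the second derivative written as an asymptotic variance. Integrating the resulting one-sided derivative bounds over $[-\epsilon,0]$ and $[0,\epsilon]$ then gives (1).

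You prove the same inequality directly, for every $t\in\mathbb R$, using Jensen's inequality and the conformality $\mathcal L_0^*\mu=\mu$. Convexity of the exponential does the job of convexity of $\log\rho$. Conformality plus $F$-invariance of $\mu$ does the job of identifying the derivative at $0$. You also remove the absolute value via $\mathcal L_\epsilon 1\le 1$, rather than via monotonicity of $\rho$.

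What your route buys:
\begin{itemize}
\item It needs no spectral theory at all.
\item It works directly with the paper's definition of $\rho(t)$ on $C([0,1])$.
\item It holds for every $\epsilon>0$, whereas Lemma \ref{spectrum} is only claimed for $\epsilon<1$. The paper's own footnote notes that the spectral picture degenerates at $t=1$ because of the neutral fixed point.
\end{itemize}
What the paper's route buys in addition is smoothness of $\log\rho$ at $0$. This shows that both sides of (1) lie within $O(\epsilon)$ of $\lambda(\mu)$, but that is not part of the statement.

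One small correction. On $[0,\tfrac12]$ we have $\log|F'(x)|=-2\log(1-x)$, and on $[\tfrac12,1]$ we have $\log|F'(x)|=-2\log x$. So $\log|F'|$ takes values in $[0,2\log 2]$ and is continuous, since $|F'(\tfrac12)|=4$ from both sides. The expression $2\log(1+x)$ is $-\log|T_j'(x)|$, not $\log|F'|$ on the first branch. The second branch lives on $[\tfrac12,1]$, so nothing is unbounded near $0$. The integrability issue you flag as the main obstacle therefore does not arise. In any case, only the bounded functions $\log|T_{\underline i}'|\in[-n\log 4,0]$ enter your Jensen step.
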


In practice, the  values of $\rho(\pm \epsilon)$ maybe difficult to estimate directly from the definition.
However, we  can  implement more tractable bounds  using the following simple lemma.

\begin{prop}\label{functionbd}
For any $\epsilon > 0$ assume we have 
\begin{enumerate}
\item
a pair of 
real numbers $r_- > 1 > r_+$; and
\item a pair of 
 functions $w_-, w_+ \in C([0,1])$  with $w_-, w_+ > 0$  
 \end{enumerate}
satisfying 
$$
r_+ \leq 
\inf_{0 \leq x \leq 1} \frac{\mathcal L_{\epsilon}w_+(x)}{w_+(x)}
\hbox{ and }
\sup_{0 \leq x \leq 1} \frac{\mathcal L_{-\epsilon}w_-(x)}{w_-(x)}
\leq r_-
$$
then $r_+ \leq \rho(+\epsilon)
% \geq r_+
$ and 
$\rho(-\epsilon) \leq r_- $.
\end{prop}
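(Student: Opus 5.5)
The argument rests on two facts: $\mathcal L_t$ is a positive operator, and the spectral radius formula stated before Proposition \ref{distributionbd} holds. Positivity means $w \geq 0$ implies $\mathcal L_t w \geq 0$, and so $u \leq v$ pointwise implies $\mathcal L_t u \leq \mathcal L_t v$. Both weights $(1+x)^{-2t}/2$ are positive, so this is immediate from the definition. We treat the two inequalities separately and in the same way. The hypothesis is turned into a pointwise inequality between $\mathcal L_{\pm\epsilon}^n w_\pm$ and $r_\pm^n w_\pm$. Comparing sup norms then gives the bound on $\|\mathcal L_{\pm\epsilon}^n\|_\infty^{1/n}$, and we let $n\to\infty$.

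For the upper bound, the hypothesis gives $\mathcal L_{-\epsilon} w_- \leq r_- w_-$ pointwise. Applying $\mathcal L_{-\epsilon}$ repeatedly and using positivity and induction, we get $\mathcal L_{-\epsilon}^n w_- \leq r_-^n w_-$ for all $n$. Since $w_-$ is continuous and strictly positive on the compact interval $[0,1]$, we have $0 < m_- := \inf w_- \leq \sup w_- =: M_- < \infty$. For any $w$ with $\|w\|_\infty \leq 1$ we have $|w| \leq 1 \leq w_-/m_-$. Positivity gives $|\mathcal L^n_{-\epsilon} w| \leq \mathcal L^n_{-\epsilon}|w|$, hence
$$|\mathcal L_{-\epsilon}^n w| \leq m_-^{-1}\mathcal L^n_{-\epsilon} w_- \leq (M_-/m_-)\, r_-^n .$$
Therefore $\|\mathcal L_{-\epsilon}^n\|_\infty \leq (M_-/m_-) r_-^n$. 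Taking $n$-th roots, the constant $(M_-/m_-)^{1/n}$ tends to $1$, so $\rho(-\epsilon)\leq r_-$.

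For the lower bound, the hypothesis gives $\mathcal L_\epsilon w_+ \geq r_+ w_+$ pointwise. Note $r_+>0$ is implicit, since the ratio is positive; if not, the claim is trivial because $\rho \geq 0$. Iterating with positivity gives $\mathcal L_\epsilon^n w_+ \geq r_+^n w_+ \geq r_+^n m_+$, where $m_+ = \inf w_+ > 0$ and $M_+ = \sup w_+$. Testing the operator norm on the function $w_+/M_+$, which has norm $1$, gives
$$\|\mathcal L_\epsilon^n\|_\infty \geq \|\mathcal L_\epsilon^n w_+\|_\infty/M_+ \geq (m_+/M_+)\, r_+^n .$$
Taking $n$-th roots and the limit yields $\rho(\epsilon)\geq r_+$.

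There is no real obstacle here. The only points needing care are that strict positivity and continuity of $w_\pm$ give finite, nonzero comparison constants $m_\pm, M_\pm$, and the inequality $|\mathcal L^n w| \leq \mathcal L^n |w|$, which follows from positivity. The assumption $r_- > 1 > r_+$ plays no role in the argument. It is only there so that the resulting bounds are useful in Proposition \ref{distributionbd}.
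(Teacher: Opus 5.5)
Your proof is correct. Its first step is the same as the paper's: both iterate the hypothesis using positivity to get $\mathcal L_{-\epsilon}^n w_- \le r_-^n w_-$ and $\mathcal L_\epsilon^n w_+ \ge r_+^n w_+$ (the paper's (3a), (3b)).

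The two routes diverge in how these inequalities are turned into statements about $\rho(\pm\epsilon)$. The paper invokes Lemma \ref{spectrum}, a simple leading eigenvalue with a spectral gap on $C^1$. From it the paper gets $\rho(\pm\epsilon)^{-n}\mathcal L_{\pm\epsilon}^n w_\pm \to \mu_{\pm\epsilon}(w_\pm)\, h_{\pm\epsilon}$ uniformly, and hence that the $n$-th roots of $\sup_x$ and $\inf_x$ of $\mathcal L^n_{\pm\epsilon} w_\pm$ tend to $\rho(\pm\epsilon)$ (its (4)). You instead work directly with the definition $\rho(t)=\lim_n\|\mathcal L_t^n\|_\infty^{1/n}$. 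For the upper bound you dominate $|w|$ by $\|w\|_\infty w_-/m_-$, and for the lower bound you test the operator norm on $w_+$.

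Your version is more elementary and more robust:
\begin{enumerate}
\item It uses only positivity and the definition of $\rho$, so it needs no quasi-compactness or spectral gap.
\item It applies to any strictly positive continuous $w_\pm$. The paper's convergence step is formulated on $C^1$ and tacitly uses $\mu_{\pm\epsilon}(w_\pm)>0$ and $h_{\pm\epsilon}>0$.
\item It holds for every $\epsilon>0$, as the statement asserts, whereas Lemma \ref{spectrum} is only claimed for $\epsilon<1$.
\item You also handle the case $r_+\le 0$, which the chain (3b) tacitly excludes because it multiplies inequalities by powers of $r_+$.
\end{enumerate}

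What the paper's route adds is the identification of the growth rate of $\mathcal L^n_{\pm\epsilon}w_\pm$ with the leading $C^1$-eigenvalue, the object that Lemma \ref{rho} differentiates. That identification is not needed for this proposition.
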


Comparing   Proposition  \ref{distributionbd} 
and Proposition  \ref{functionbd} we can 
  deduce that
$$
\frac{|\log r_+|}{\epsilon}
%\leq 
%\frac{|\log \rho({\epsilon})|}{\epsilon}
\leq
\lambda(\mu)
%\leq
% \frac{\log \rho({-\epsilon})}{\epsilon}
 \leq \frac{|\log r_-|}{\epsilon}.\eqno(2)
$$

\section{Proof of Propositions  \ref{distributionbd} and  \ref{functionbd}}

We need to formulate some properties of the transfer operators $\mathcal L_{\pm \epsilon}$ used in the proof.  It is useful to  first consider it acting on the (smaller)  space $C^1([0,1]) \subset C([0,1])$ of $C^1$ functions 
which is a Banach space with the new norm
$$
\|w\| := \| w\|_\infty +  \| w'\|_\infty.
$$
We recall that the essential spectral radius $\rho_{ess}(t)$ is given by
$$
\rho_{ess}(t) = \lim_{n \to +\infty} \inf \left\{\|\mathcal L_t - K\| \hbox{ : } K \hbox{ is a compact operator} \right\}.
$$
The following lemma describes the spectrum of $\mathcal L_t$ on $C^1([0,1])$.

\begin{lem}[Spectrum of $\mathcal L_t$]\label{spectrum} 
Let $\epsilon < 1$.
%$4^{\epsilon + 1} = 2$.
%$2^{2\epsilon + 2} = 2$ and $$
For $-\epsilon \leq t \leq \epsilon$:
\begin{enumerate}
\item
The operator $\mathcal L_t: C^1([0,1[) \to C^1([0,1[)$   has an 
essential spectral radius $\rho_{ess}(t) < \rho(t)$;
\item 
The operator   $\mathcal L_t: C^1([0,1[) \to C^1([0,1[)$  has a simple maximal positive eigenvalue $\rho(t)$ with eigenfunction $h_t$ (i.e., $\mathcal L_th_t  = \rho(t) h_t$);
\item 
The dual operator   $\mathcal L_t^*: C^1([0,1[)^* \to C^1([0,1[)^*$  has a simple maximal positive eigenvalue $\rho(t)$ with eigenmeasure  $\mu_t$ (i.e., $\mathcal L_t^* \mu_t = \rho(t) \mu_t$); and 
\item The rest of the spectrum of $\mathcal L_t: C^1([0,1[) \to C^1([0,1[)$ lies inside a disk of radius  strictly smaller than 
$\rho(t)$.
\end{enumerate}
\end{lem}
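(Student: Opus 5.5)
The plan is to treat $\mathcal L_t$ as a positive transfer operator with $C^1$ weight $g_t(x) = \frac{1}{2}(1+x)^{-2t}$ over the two inverse branches $T_1(x) = x/(1+x)$ and $T_2(x) = 1/(1+x)$ of the Farey map. We then apply the standard Lasota--Yorke / Ionescu-Tulcea--Marinescu / Hennion machinery on $C^1([0,1])$, followed by a Perron--Frobenius (positivity) argument.

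The Farey map has a neutral fixed point at $0$, since $T_1'(0)=1$. So the branches are not uniformly contracting, and a single application of $\mathcal L_t$ gives no contraction in the derivative. The first step is therefore to pass to an iterate $\mathcal L_t^n$ and track, for each word $i_1\cdots i_n$, the composition $T_{i_1\cdots i_n}$ together with the product weight $g_t^{(n)}$.

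Differentiating $\mathcal L_t^n w$ gives
\[
(\mathcal L_t^n w)' = \sum g_t^{(n)}\, (w'\circ T_{i_1\cdots i_n})\, T_{i_1\cdots i_n}' + \sum (g_t^{(n)})'\,(w\circ T_{i_1\cdots i_n}).
\]
This yields the Lasota--Yorke inequality
\[
\|(\mathcal L_t^n w)'\|_\infty \le \theta_n(t)\,\|w'\|_\infty + C_n\|w\|_\infty, \qquad \theta_n(t) = \Big\|\sum g_t^{(n)}\,|T_{i_1\cdots i_n}'|\Big\|_\infty = \|\mathcal L_{t+1/2}^n 1\|_\infty \cdot (\text{const}).
\]
The identity uses $|T_i'(x)| = (1+x)^{-2}$.

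Since $C^1 \hookrightarrow C^0$ is compact (Arzelà--Ascoli), Hennion's theorem gives
\[
\rho_{ess}(t) \le \lim_n \theta_n(t)^{1/n} = \rho_{C^0}(t+\tfrac12),
\]
where $\rho_{C^0}(s)$ denotes the spectral radius on $C^0$. For part (1) we must show $\rho(t+\frac12) < \rho(t)$ for $|t|\le\epsilon<1$. Pointwise $(1+x)^{-1} \le 1$, with strict inequality off $x=0$. The difficulty is exactly the neutral point, since there the weight ratio tends to $1$.

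\textbf{This is the main obstacle.} The approach I would try first uses the variational principle: $\log\rho(s) = P(-2s\log(1+x))$ is the pressure for the full two-branch shift with weight $\frac12$. The pressure is strictly decreasing in $s$, provided it is not realized only by the Dirac mass at $0$. Because the Dirac mass at $0$ has zero Lyapunov exponent, one checks that $P(s) > -\log 2 \cdot 0$ appropriately. More concretely, $\log\rho(t) \ge h_\nu - \log 2 - 2t\int\log(1+x)\,d\nu$ for $\nu = \mu$ or nearby Bernoulli measures, and this is strictly larger than the value for $t+\frac12$. The restriction $\epsilon<1$ (negative $t$ allowed) is where the explicit bound, e.g.\ $4^{\epsilon}\cdot\frac12$-type estimates (as hinted in the commented conditions), must be checked.

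If the neutral point causes trouble, a fallback is to replace $C^1$ by a weighted space, or to use the induced (Gauss-type) map to obtain uniform contraction.

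Once (1) holds, parts (2)--(4) follow from positivity. $\mathcal L_t$ preserves the cone of positive functions, and more usefully the cone
\[
\{w>0 : |w'| \le K w\}
\]
for large $K$, by the same derivative estimate. The two branches cover $[0,1]$, and $T_2$ is a strict contraction with fixed point the golden mean, so some iterate maps this cone strictly into a smaller cone. Birkhoff's Hilbert-metric contraction then gives the following:
\begin{itemize}
\item a unique positive eigenfunction $h_t$ with eigenvalue equal to the spectral radius;
\item simplicity of that eigenvalue;
\item no other eigenvalues on the circle $|z| = \rho(t)$, which together with the gap in (1) gives the disk in (4).
\end{itemize}

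Part (3) is obtained by applying the same argument to the dual, or by the Riesz representation. The functional $w\mapsto\lim_n \rho(t)^{-n}\mathcal L_t^n w / h_t$ is a positive functional, hence given by a measure $\mu_t$ with $\mathcal L_t^*\mu_t=\rho(t)\mu_t$. Simplicity of this eigenvalue follows from the spectral projection being rank one.

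Finally, the spectral radius on $C^1$ coincides with the $C^0$ one, as noted in the footnote. This follows by comparing $\|\mathcal L_t^n 1\|_\infty$, which is evaluated by positivity, in both norms.
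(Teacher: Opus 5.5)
Your Hennion-type reduction of (1) is sound, and it is sharper than the paper's proof. The paper uses only the explicit constant $\theta=\tfrac12(1+4^{-(t+1)})$ for $\mathcal L_t^2$, i.e.\ $\rho_{ess}(t)\le\theta^{1/2}$. That bound lies below $\rho(t)$ for $t\le 0$ and for small $t>0$ (the range actually used), but not for all $\epsilon<1$. Two corrections. First, the coefficient of $w'$ in $(\mathcal L_t^n w)'$ is $\sum_{|v|=n}2^{-n}|T_v'|^{t+1}=\mathcal L_{t+1}^n 1$, because $(1+x)^{-2t}(1+x)^{-2}=(1+x)^{-2(t+1)}$. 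So Hennion gives $\rho_{ess}(t)\le\lim_n\|\mathcal L_{t+1}^n1\|_\infty^{1/n}$; your value at $t+\frac12$ is still an upper bound since $|T_v'|\le 1$, but it is not an identity. Second, negative $t$ is the easy range: for $-1<t\le 0$ you have $\rho(t)\ge 1>\theta^{1/2}\ge\lim_n\|\mathcal L_{t+1}^n1\|_\infty^{1/n}$.

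The genuine gap is the range $0<t<1$, which you leave open. There $\lim_n\|\mathcal L_{t+1}^n1\|_\infty^{1/n}=\tfrac12$ exactly. The neutral fixed point gives $\mathcal L_{t+1}^n1(0)\ge 2^{-n}$. On the other hand $\mathcal L_{t+1}^n1\le\mathcal L_1^n1$, and $2^n\mathcal L_1^n1$ is the Lebesgue transfer operator of $F$ iterated on $1$, whose sup norm grows only polynomially. So along your route, (1) comes down to $\rho(t)>\tfrac12$, i.e.\ positivity of the Farey pressure $P(-t\log|F'|)$ for every $t<1$. Your test measures do not deliver this. The measure $\mu$ gives $\log\rho(t)\ge -t\lambda(\mu)>-\log 2$ only for $t<\log 2/\lambda(\mu)=\dim_H(\Lambda_\infty)\approx 0.8747$. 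Bernoulli measures for the two-symbol Farey coding have $h/\lambda$ bounded away from $1$, so they cannot reach $t$ near $1$. The missing input is of Jarn\'{\i}k type: you need invariant measures with $h_\nu/\lambda(\nu)\to 1$, for instance lifts of equilibrium states of the Gauss map $G$ restricted to partial quotients $\le N$. Equivalently, you can induce on the blocks $T_1^{a-1}\circ T_2\colon x\mapsto 1/(a+x)$ and use that $P(-t\log|G'|)>0$ for $t<1$.

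The same inequality is hidden in your cone step. The cone $\{w>0:|w'|\le Kw\}$ is not preserved by $\mathcal L_t$ for any $t\neq 0$: indeed $(\log\mathcal L_t w)'(0)=-2t+\frac{w'(0)-w'(1)}{w(0)+w(1)}$ can have modulus $K+2|t|$. For iterates, $(\log|(T_1^m)'|)'(0)=-2m$. So a strict cone contraction for $\mathcal L_t^n$ requires the compositions $T_u\circ T_1^m$, whose relative weight at $0$ is of order $(2\rho(t))^{-m}$, to be summably suppressed, which again means $\rho(t)>\tfrac12$. Once that inequality is established, the rest of your outline goes through.
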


This result is standard and well known in the case that $T_1$ and $T_2$ are strict contractions. 
(In this case, the value $\log \rho(t)$ is identified with the topological pressure of the function $-\log|T'|$.)
For the current case of $\mathcal L_t$ we outline the modified proof in the appendix.
\footnote{For the Farey map it is well known that when $t=1$ the  transfer operator does not have good spectral properties.  In particular, $\rho(1) = \rho_{ess}(1)$ and the function $h_1$ is not integrable, as is reflected in the absolutely continuous invariant probability being $\sigma$-finite, but not finite  finite.  However when, for example, 
 $t=0$ the situation is much better and the ambient measure $\mu_0$ is the measure of maximal entropy and $h_0=1$.  So for $t=0$, and nearby values, we can expect the operator to have good  spectral properties}

\begin{lem}[Properties of $\rho(t)$]\label{rho}
The spectral radius $\rho(t)$ has the following properties:
\begin{enumerate}
\item 
$\mathbb R \ni t \mapsto \log \rho(t)$ is  smooth and  monotone decreasing;
\item 
$\frac{d \log \rho(t)}{dt}|_{t=0} = -\lambda(\mu)$; and 
\item 
$\mathbb R \ni t \mapsto \log \rho(t)$ is   convex.
\end{enumerate}
\end{lem}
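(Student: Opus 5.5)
The plan is to treat the three items separately. Monotonicity and convexity come from elementary positivity and H\"older arguments applied to $\mathcal L_t^n 1$. Smoothness and the derivative formula come from analytic perturbation theory built on Lemma \ref{spectrum}.

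\textbf{Step 1: a formula for $\rho(t)$.} Since $\mathcal L_t$ is a positive operator, $\|\mathcal L_t^n\|_\infty = \|\mathcal L_t^n 1\|_\infty$. Iterating the definition gives
$$
\mathcal L_t^n 1(x) = \frac{1}{2^n}\sum_{|i|=n} |(T_{i_1}\circ\cdots\circ T_{i_n})'(x)|^{t},
$$
summed over words $i=(i_1,\dots,i_n)\in\{1,2\}^n$. This uses $|T_j'(x)| = (1+x)^{-2}$ for both branches and the chain rule. Hence $\rho(t)=\lim_n \|\mathcal L_t^n 1\|_\infty^{1/n}$. Every composition $T_i$ satisfies $|T_i'|\le 1$, because each $T_j$ is a weak contraction. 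So each summand is non-increasing in $t$, and therefore so is $\rho(t)$; this is item (1) apart from smoothness.

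For convexity, fix $x$. For $t=\alpha t_1+(1-\alpha)t_2$, H\"older's inequality applied to the sum gives
$$
\mathcal L_t^n1(x)\le (\mathcal L_{t_1}^n1(x))^{\alpha}(\mathcal L_{t_2}^n1(x))^{1-\alpha}.
$$
Taking suprema in $x$, then $\frac1n\log$, and letting $n\to\infty$ shows that $\log\rho$ is convex on $\mathbb R$. This is item (3).

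\textbf{Step 2: smoothness and the derivative.} On $C^1([0,1])$ the family $t\mapsto\mathcal L_t$ is real-analytic in operator norm, since $(1+x)^{-2t}=e^{-2t\log(1+x)}$. Wherever Lemma \ref{spectrum} provides an isolated simple maximal eigenvalue with a spectral gap, Kato's perturbation theory makes $\rho(t)$, $h_t$ and $\mu_t$ analytic in $t$. Because $\epsilon<1$ is arbitrary in that lemma, this already covers the whole range where the spectral picture holds. Smoothness elsewhere on $\mathbb R$ would follow by repeating the appendix argument for the corresponding $t$, or, on any region where $\rho$ is locally constant, trivially.

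For item (2), differentiate $\mathcal L_t h_t=\rho(t)h_t$ at $t=0$ and pair with $\mu_0$. Using $\mathcal L_0^*\mu_0=\mu_0$ and $h_0=1$, the $h_0'$ terms cancel and we obtain
$$
\rho'(0)=\frac{\int \frac{d}{dt}\big|_{t=0}\mathcal L_t 1\,d\mu_0}{\int h_0\,d\mu_0}
=-2\int_0^1\log(1+x)\,d\mu(x)=-\lambda(\mu).
$$
This uses the identification $\mu_0=\mu$, the measure of maximal entropy, from the earlier footnote. Since $\rho(0)=1$, this gives $\frac{d}{dt}\log\rho(t)|_{t=0}=-\lambda(\mu)$.

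\textbf{Main obstacle.} The delicate step is smoothness for \emph{all} real $t$. Near $t=1$ the parabolic fixed point $0$ of $F$ degrades the spectral gap, as the footnote on $\rho(1)=\rho_{ess}(1)$ warns. I would first try to extend the appendix's essential spectral radius estimate to each fixed $t$. If that fails at some $t$, note that Propositions \ref{distributionbd} and \ref{functionbd} only use $t\in[-\epsilon,\epsilon]$. On that interval the argument above is complete, and convexity and monotonicity hold globally by Step 1.
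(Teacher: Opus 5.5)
Your treatment of smoothness and of item (2) is the paper's: Kato perturbation of the isolated eigenvalue, then differentiate $\mathcal L_t h_t=\rho(t)h_t$ and pair with the eigenmeasure so the $\dot h_t$ terms cancel. The paper does this at general $t$, obtaining $(\log\rho)'(t)=-\int\log|F'|\,h_t\,d\mu_t$, and then sets $t=0$.

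For monotonicity and convexity you take a genuinely different route. The paper reads monotonicity off the sign of that derivative formula, and convexity off the Parry--Pollicott asymptotic-variance formula for $(\log\rho)''$. Both need analyticity and a spectral gap, so they are only available on the range of Lemma \ref{spectrum}, though there they give more: strict decrease, and a dynamical meaning for the second derivative. Your argument uses the pointwise comparison $|T_i'|\le 1$ and H\"older's inequality on $\mathcal L_t^n1=2^{-n}\sum_{|i|=n}|T_i'|^t$. It needs no spectral theory and holds for every $t\in\mathbb R$. It gives only non-strict monotonicity, but that is all Proposition \ref{distributionbd} needs (to know $\log\rho(\epsilon)\le 0$). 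Strict decrease on $(-1,1)$ follows anyway from monotonicity, analyticity there, and $(\log\rho)'(0)=-\lambda(\mu)\neq 0$.

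One link you should make explicit: the $C([0,1])$ quantity of your Step 1 equals the $C^1$ eigenvalue of Step 2. This holds because $h_t>0$ squeezes $\mathcal L_t^n1$ between $\rho(t)^n h_t/\max h_t$ and $\rho(t)^n h_t/\min h_t$.

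Drop the sentence suggesting that smoothness elsewhere on $\mathbb R$ would follow by repeating the appendix argument. It cannot, because that part of the statement is false.
\begin{itemize}
\item Your own formula gives $\mathcal L_t^n1(0)\ge 2^{-n}$, since the word $1\cdots1$ has $(T_1^n)'(0)=1$. Hence $\rho\ge\tfrac12$ for every $t$.
\item For $t\ge1$ we have $\sum_{|i|=n}|T_i'|^t\le\sum_{|i|=n}|T_i'|\le n+1$, because each $T_i$ is a M\"obius map $x\mapsto(ax+b)/(cx+d)$ with $|T_i'|\le d^{-2}$, $|T_i[0,1]|=1/(d(c+d))$ and $c/d\le n$. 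So $\rho\equiv\tfrac12$ on $[1,\infty)$.
\item By contrast, $\rho>\tfrac12$ on $(-\infty,1)$; this is the Farey phase transition.
\end{itemize}
Thus $\log\rho$ is neither strictly decreasing on $\mathbb R$ nor analytic at $t=1$; by the known asymptotics it is not even $C^2$ there. The paper's proof simply invokes isolation of $\rho(t)$, so it tacitly makes the same restriction to the range of Lemma \ref{spectrum} that your fallback makes explicit. On that range, your argument is complete.
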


%Finally we come to the following proof.
\begin{figure}[h!]
\centerline{
\begin{tikzpicture}[scale=0.5]
  \draw[thick,->] (0,0) -- (15,0);
    \draw[thick,->] (0,-6) -- (0,10);
\draw (2,8) .. controls (6,0) and (9,-3) .. (15,-5);
  \draw[thick,-, red] (3,5) -- (11,-5);
    \draw[thick,dashed, blue] (3,6.2) -- (3,0);
        \draw[thick,dashed, blue] (3,6.2) -- (0,6.2);
            \node at (-2,6.3) {$\log \rho(-\epsilon)$};
        \draw[thick,dashed, blue] (11,-5) -- (11,0);
                \draw[thick,dashed, blue] (11,-5) -- (0,-5);
        \node[below] at (3,-1) {$-\epsilon$};
               \node[below] at (7,1.5) {$0$};
                              \node at (16,0) {$t$};
                   \node[below] at (11,1.5) {$\epsilon$};
                \node[below] at (-2,10) {$\log \rho(t)$};
                  \node[below] at (-2,-4.5) {$\log \rho(\epsilon)$};
\end{tikzpicture}  
}
\caption{The proof of  Proposition  \ref{distributionbd}} 
\end{figure}

This result is again standard in the case that $T_1$ and $T_2$ are strict contractions. 
For the current case of $\mathcal L_t$ we outline the modified proof in the appendix.

\begin{proof}[Proof of Proposition  \ref{distributionbd}]
By the previous lemma we can write
$$\frac{d \log \rho(t)}{dt} \quad 
\begin{cases}
 \leq -\lambda(\mu) 
&\hbox{ if }   -\epsilon < t \leq 0\\
 \geq -\lambda(\mu) &\hbox{ if }   0 \leq t < \epsilon \\
 \end{cases}
 $$
 and integrating this derivative over the intervals $[-\epsilon, 0]$ and $[0, \epsilon]$, respectively, 
  gives the required bounds.
 This is illustrated in Figure 3.
\end{proof}

This brings us to the proof of the propositions.

\begin{proof}[Proof of Proposition  \ref{functionbd}]
This proof follows the lines argument \cite{PV}.
Using 
 the positivity of the operators  and iterating   the inequalities in the hypotheses gives that 
 $$r_-^n  w_{-}(x)  
\geq  r_-^{n-1}  (\mathcal L_{-\epsilon} w_-)(x) \geq
 r_- ^{n-2} (\mathcal L_{-\epsilon}^{2} w_-)(x) \geq
 \ldots
\geq (\mathcal L_{-\epsilon}^n w_-)(x)
\eqno(3a)
$$ 
and 
$$  r_+^n w_+(x) \leq
r_+^{n-1}  (\mathcal L_{\epsilon}  w_+)(x) \leq 
r_+^{n-2} (\mathcal L_{\epsilon}^2  w_+)(x) \let 
\cdots
\leq 
  (\mathcal L_{\epsilon}^n w_+)(x)
  \eqno(3b)
  $$
for all $0 \leq x \leq 1$ and all  $n \geq 1$.

Since by Lemma \ref{spectrum} the operators $\mathcal L_{\pm \epsilon}$ have  simple maximal eigenvalues $\rho({\pm \epsilon})$ (with eigenvectors $h_{\pm \epsilon}$ and eigendistributions $\mu_{\pm \epsilon}$ with $\mu_{\pm \epsilon}(h_{\pm \epsilon}) = 1$, without loss of generality) 
with no other eigenvalues on the unit circle we have that 
$$\lim_{n \to +\infty} \| \rho(\pm \epsilon)^{-n}\mathcal L_{\pm \epsilon}^nw_{\pm \epsilon} - 
\underbrace{\mu_{\pm \epsilon}(w_{\pm \epsilon})}_{=1}\|_\infty=0$$
and, in particular, 
$$
 \lim_{n \to +\infty} \sup_{0 \leq x \leq 1}\left| \mathcal L_\epsilon^n w_+(x)\right|^{1/n} 
 = \rho(\epsilon)
 \hbox{ and }
  \lim_{n \to +\infty} \inf_{0 \leq x \leq 1}\left| \mathcal L_{-\epsilon}^n w_-(x)\right|^{1/n} 
 = \rho(-\epsilon)
 \eqno(4)
$$
Moreover, we trivially have 
$$
 \lim_{n \to +\infty} \inf_{0 \leq x \leq 1}\left|  w_+(x)\right|^{1/n} 
 = 1 =   \lim_{n \to +\infty} \sup_{0 \leq x \leq 1}\left| w_-(x)\right|^{1/n} .
 \eqno(5)
$$
Thus, by (3a), (3b), (4) and (5) we have
 $$
 \begin{aligned}
 \frac{\rho(\epsilon)}{r_+}  
% \lim_{n \to +\infty} \sup_{0 \leq x \leq 1}\left| \mathcal L_\epsilon^n w_+(x)\right|^{1/n} 
\geq 1
\hbox{ and }
\frac{\rho(-\epsilon)}{r_-} &
% \lim_{n \to +\infty} \inf_{0 \leq x \leq 1}\left| \mathcal L_{-\epsilon}^n w_-(x)\right|^{1/n}
 \leq 1,\cr
%\rho(\epsilon):= & \lim_{n \to +\infty} \sup_{0 \leq x \leq 1}\left| \mathcal L_\epsilon^n w(x)\right|^{1/n} \geq r_+\cr
\end{aligned}
$$
as required.
%However, we know that 
%$\lim_{n \to +\infty } \|\mathcal L_{\pm \epsilon}^n w_{\pm \epsilon} \|^{1/n}= \rho(\mathcal L_{\pm \epsilon})$ 
%and trivially $\lim_{n \to +\infty } \|w_{\pm \epsilon} \|^{1/n} = 1$.
\end{proof}

\section{Implementing the estimates}
We can find candidate functions for $w_{-}$ and $w_{+}$ using standard ideas from numerical analysis.
More precisely, as in \cite{PV} let us fix $N \geq 2$ and choose
\begin{enumerate}
\item Chebychev nodes $\{x_n\}_{n=1}^N \subset [0,1]$ (where $x_n = \frac{1}{2} \left(1 +\cos\left(\frac{n - \frac{1}{2}}{N} \right) \right)$); and 
\item Lagrange polynomials $\{\ell\}_{n=1}^N$ (where $\ell_n(x) = \prod_{k=1}^N(x-x_n)/\prod_{k\neq n}(x_n - x_k)$).
\end{enumerate}

We can then associate the $N \times N$ matrices $M_{\pm \epsilon}$ with entries 
$$
M_{\pm \epsilon}(i,j) = (\mathcal L_{\pm \epsilon} \ell_i)(x_j) \hbox{ for } 1 \leq i,j \leq N
$$
For sufficiently large $N$ the matrix will have a maximal positive eigenvalue 
with eigenvector  $v_{\pm} = (v_{\pm, 1}, \cdots, v_{\pm, N})$.  Finally one can let 
$$
w_{\pm}(x) = \sum_{n=1}^N v_{\pm, n} \ell_n(x).
$$

We first give a simple implementation of these estimates to illustrate the method. 

\begin{example}[Simple bound]
Let  $\epsilon = \frac{1}{10^3}$ and  $N=6$.  
With this choice of $N$ the functions $w_-, w_+: [0,1] \to \mathbb R^+$ coming from the above constructions are polynomials of degree $5$ 

$$
\begin{aligned}
 w_+(x) &= 
(0.408614\ldots)
%183639952651027861117447371823140 - 
+
 (0.001189 \ldots)
 %04604564607073102706277937624081076 
 x + 
(0.001228 \ldots)
 %44534933113280004767284136776844701 
 x^2\cr
 &\quad - 
 (0.001309\ldots )
 %81504975842815285710651426539869424 
 x^3 
 + 
 (0.000869\ldots)
 %53869460608652879669267822865173571 
 x^4 - 
(0.000246\ldots)
 %028854217589677455225269915638087284 
 x^5
\end{aligned}
$$
and 
$$ 
\begin{aligned}
w_-(x) &= 
(0.407882 \ldots)
%797130703695893972564377825284734 
+ 
 (0.001184\ldots)
 %79744850006901356116756272819556904
  x - 
 (0.001218\ldots)
 %32227509081478574529755758178862648 
 x^2\cr  &\quad+ 
 (0.001296\ldots) 
 %85036969081371948041681316176889636 
 x^3 - 
 (0.000860 \ldots)
 %33009184670257536524874276003321854 
 x^4 + 
 (0.000243 \ldots)
 %340615667749756937426166371542101478 
 x^5
 \end{aligned}
$$
 These are illustrated in Figures 3 and Figure 4.  The plots of $w_+$ and $w_-$ are sufficiently close as to appear indistinguishable.
 
\bigskip
\noindent
One  can easily check that the associated values are
$r_- = 0.9992 0825 \ldots$ and $r_+= 1.00079246
%67
 \ldots $.

\begin{figure}[h!]
\centerline{
\includegraphics[width=0.4\textwidth]{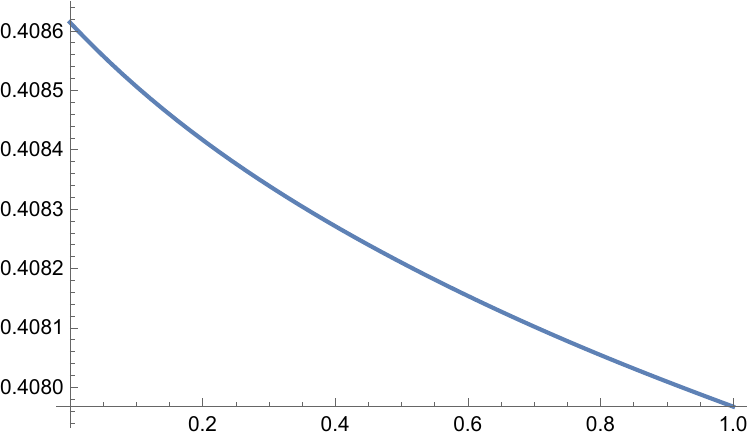} 
\hskip 0.75cm
\includegraphics[width=0.4\textwidth]{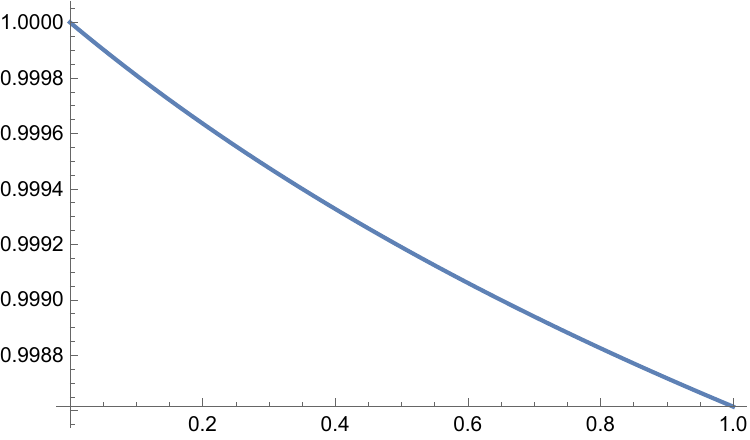} 
}
\caption{(i) The graph of the polynomial $w_-$ (ii) The graph of the ratio $(\mathcal L_{-\epsilon} w_-)/w_-$}
\end{figure}

\begin{figure}[h!]
\centerline{
\includegraphics[width=0.4\textwidth]{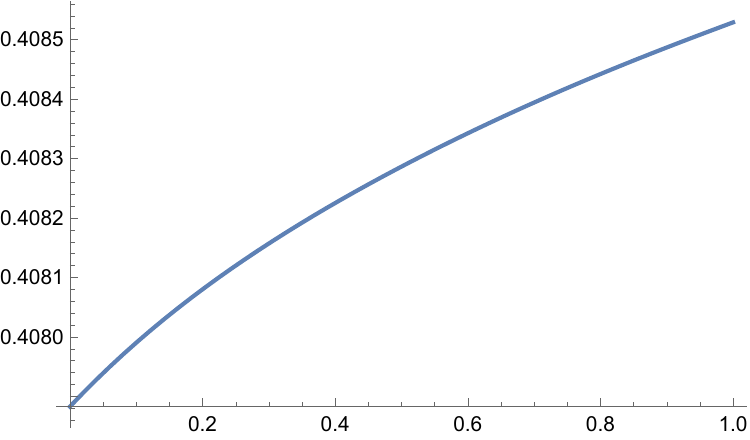} 
\hskip 0.75cm
\includegraphics[width=0.4\textwidth]{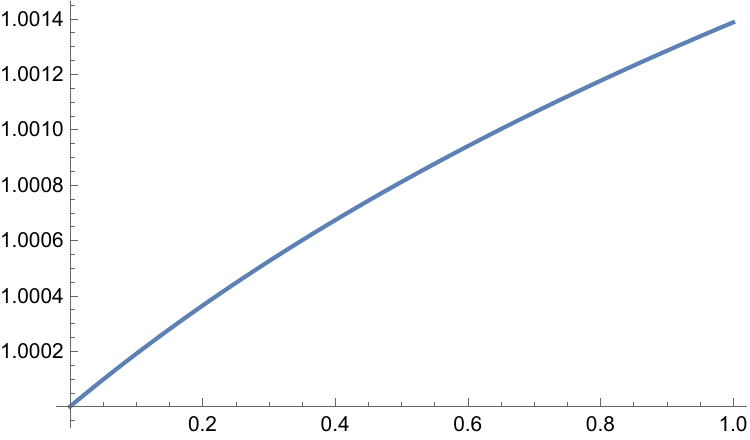} 
}
\caption{(i) The graph of the polynomial $w_+$ (ii) The graph of the ratio $(\mathcal L_{-\epsilon} w_+)/w_+$}
\end{figure}

\noindent
Working to $50$ decimal places one obtains from (2) an estimate for the Lyapunov exponent of 
$$
\begin{aligned}
{\bf 0.792}4 371 \ldots 
&\leq \lambda(\mu) 
\leq {\bf 0.792}5 251 \ldots
\end{aligned}
$$
This leads to basic bounds on the dimension of
$$
\begin{aligned}
{\bf 0.87}4169 \ldots
\leq \dim_H(\Lambda_\infty)
&\leq 
{\bf 0.87}5016 \ldots
\end{aligned}
$$
with the difference between the upper and lower bounds being $0.000847 \ldots$.
\end{example}

\begin{example}[Better bounds]
Let  $\epsilon = \frac{1}{120}$ and  $N=300$.  
Working to $200$ decimal places one obtains an estimate for the Lyapunov exponent of 
$$
\begin{aligned}
&
{\bf 0.792425128595489118191211515299891398889412782043800}62372638629709291\cr
&\leq \lambda(\mu) \leq  \cr
&
{\bf 0.792425128595489118191211515299891398889412782043800}93313729121852886\cr
\end{aligned}
$$
This leads to bounds on theHausdorff  dimension of $\Lambda_\infty$
$$
\begin{aligned}
&
{\bf 0.87471630510821114221515290421915975775792728975153} 223751895331660433\cr
&\leq \dim_H(\Lambda_\infty) \leq \cr
&
{\bf 0.87471630510821114221515290421915975775792728975153} 189597657442616397
\end{aligned}
$$
with the difference between the upper and lower bounds being $3.416  \times 10^{-52}$
\end{example}

\appendix 

\section{Proof of [Proofs of Lemmas \ref{spectrum} and \ref{rho}] }
We present the proofs of the two lemmas which were postponed from earlier.

\begin{proof}[Proof of Lemma \ref{spectrum}]
For part (1): Unfortunately, since the maps  $T_1$ and  $T_2$  are not strictly contracting we cannot directly apply the traditional results (e.g, from \cite{Ru}). It should be possible to apply some version of the  general results for systems that contract on average \cite{Pe}.
 However, we will sketch how  the proofs in the hyperbolic case  can be adapted to the present context.
 We can write
$$
\begin{aligned}
\mathcal L_t^2 w(x) &= |(T_1\circ T_1)'(x)|^t w(T_1 \circ T_2(x)) 
+  |(T_1\circ T_2)'(x)|^t w(T_1 \circ T_2(x)) \cr
&+  |(T_2\circ T_1)'(x)|^t w(T_1 \circ T_2(x)) 
+ |(T_2\circ T_2)'(x)|^t w(T_1 \circ T_2(x)) 
\end{aligned}
$$
where we recall that $T_1(x) = \frac{1}{1-x}$. and $T_1(x) = \frac{1-x}{x}$.
Observe that 
$$
\begin{aligned} 
 T_1\circ T_1(x) &= \frac{1+x}{2+x} \qquad T_1\circ T_2(x) = \frac{1+x}{1+2x} \cr
 T_2\circ T_2(x) &= \frac{x}{2+x} \qquad T_2\circ T_1(x) = \frac{1}{2+x} \cr
\end{aligned}
$$
and. therefore
$$
\begin{aligned}
\frac{1}{9} \leq |(T_1\circ T_1)'(x)| &= |(T_2\circ T_1)'(x)| = \frac{1}{(2+x)^2} \leq \frac{1}{4}\cr
\frac{1}{9} \leq |(T_2\circ T_2)'(x)| &= |(T_1\circ T_2)'(x)| = \frac{1}{(1+ 2x)^2}\leq 1. \cr
\end{aligned}
$$
Furthermore, 
$$
\begin{aligned}
|(T_1\circ T_1)''(x)| &= |(T_2\circ T_1)''(x)| = \frac{2}{(2+x)^3} \leq 1\cr
|(T_2\circ T_2)''(x)| &= |(T_1\circ T_2)''(x)| = \frac{4}{(1+ 2x)^2}\leq 4 \cr
\end{aligned}
$$

We can now bound 
$$
\left|\left(
\mathcal L_t^2 w\right)' (x)
\right|
 \leq  \frac{1}{2}\sum_{i,j=1}^2  \frac{|t|. |(T_i\circ T_j)''(x)|}{ |(T_i\circ T_j)'(x)|^{1-t}} |w(T_i \circ T_j(x))|
 +  \frac{1}{2}\sum_{i,j=1}^2 |(T_i\circ T_j)'(x)|^{t+1}|w'(T_i \circ T_j(x))|
$$
and deduce that
$$
\|(\mathcal L_t^2 w)'\|_\infty \leq
\underbrace{\left(\frac{5  |t|}{9^{1-t}}\right)}_{=C} \|w\|_\infty
+ \underbrace{\frac{1}{2}  \left( \frac{1}{4^{t+1}} + 1 \right)}_{=:\theta < 1}  \|w'\|_\infty.
$$
By iteration one can show that for $n \geq 2$,
$$
\|(\mathcal L_t^{2n} w)'\|_\infty \leq \frac{C}{1-\theta} \|w\|_\infty +  \theta^n\|w'\|_\infty
$$
(i.e., a Doeblin-Fortet-Ionescu-Tulcea-Lasota-Yorke inequality).  From here the proof of quasi-compactness follows by the standard approach involving compact operators coming from the anti-derivative (cf. \cite{HH}).

For parts (2) and (3): The existence of a simple maximal eigenvalue follows by first considering the 
map on the convex weak-star compact space $\mathcal M$ of probability measures defined by 
 $\mu \mapsto \mathcal L_t^*\mu/(\mathcal L_t^*\mu(1))$.
Then by the Schauder - Tychonoff. fixed point theorem there exists the fixed point $\mu_t \in \mathcal M$.
This can then be used to define the  cones 
$$
\mathcal C_\lambda  = \left\{
f \in C( [0,1],  \mathbb R^+)  \hbox{ : } f(x) \leq e^{\lambda |x-y|} f(y), \forall x,y \in [0,1] \hbox{ and } \mu_t(f) = 1
\right\}
$$
By Doeblin-Fortet-Ionescu-Tulcea-Lasota-Yorke inequality we see that $\mathcal L_t^n  \mathcal C_\lambda \subset \hbox{\rm int}(\mathcal C_{\lambda})$ for $n$ sufficiently large.
It then follows from the theory of Birkhoff metric on cones  and the contraction mapping principle that  there exists a Lipschitz function 
$h_\lambda \in C([0,1), \mathbb R)$ which is an eigenfunction of the maximal eigenvalue.  However, by quasi-compactness one can see that actually $h_\lambda\in C^1([0,1])$.
%and then using the fixed measure $\mu_t = \rho(t)^{-1}
%\mathcal L_t^* \mu_t$ to give a normalization for a cone of $C^1$ functions.

For part (4): We can assume for a contradiction that there exists an eigenfunction $k_t$ for an eigenvalue $\rho(t) r^{i\theta}$
with $0 < \theta < 2\pi$. However, then 
$$
\frac{1}{\rho(t)}|\mathcal L_t k_t(x)| \leq \frac{1}{2\rho(t)} \left| \sum_{i} |T_i'(x)|^t k_t \left( T_i x\right)\right| 
\leq  \frac{1}{2\rho(t)} \sum_{i} |T_i'(x)|^t \left| k_t \left( T_i x\right)\right| 
$$
and by comparing this  with $\frac{1}{\rho(t)}\mathcal L_t h_t(x) = h_t(x) > 0$ and using a simple convexity argument we can deduce that $h_t = k_t$ and therefore $e^{i\theta}=1$, giving a contradiction.
\end{proof}

\begin{proof}[Proof of Lemma \ref{rho}]
There is a routine thermodynamic proof for the usual transfer operators which we adapt to the present  setting \cite{Ru}, \cite{Ru+}.
Since $\rho(t)$ is an isolated eigenvalue it follows by analytic perturbation theory that $\rho(t)$ depends analytically on $t$ \cite{Ka}.

 To show (1) we  can follow (\cite{PP}, p.60) and differentiate (in $t)$ the eigenvalue equation $\mathcal L_t h_t = \rho(t)h_t$ to get
 $$
 \mathcal L_t \left(\frac{d h_t}{dt}\right) -  \mathcal L_t \left( \log |F'| h_t\right)  =     \frac{d \rho(t)}{dt} h_t  +   \rho(t) \frac{d h_t}{dt}
 $$
 (where we use that $|T_i'| = (1/|T'|)\circ T_i$ (for $i=1,2$) by the chain rule applies to $T\circ T_i(x) = x$).
We can then apply the dual eigenmeasure identity $\mathcal L_t^* \mu_t = \rho(t)\mu_t$ to write
$$
\underbrace{
\mu_t \left(\mathcal L_t \left(\frac{d h_t}{dt}\right) \right)
}_{=\rho(t)\mu_t\left(\frac{d h_t}{dt} \right)}
 -
 \underbrace{\mu_t\left(\mathcal L_t \left( \log |F'| h_t\right) \right)}_{= \rho(t)\mu_t(\log |F'| h_t)} =    
 \frac{d \lambda_t}{dt} \underbrace{\mu_t \left( h_t \right) }_{=1} +  \rho(t)  \mu_t \left( \frac{d h_t}{dt} \right).
$$
which after canceling (the first term on the left hand side with the last  term on the right hand side) gives  gives $\frac{d \rho(t)}{dt} = -\mu_t(\log |F'| h_t) < 0$ and thus 
$\frac{d (\log \rho)(t)}{dt} = \frac{d \rho(t)}{dt}  \frac{1}{\rho(t)} < 0$, i.e., the function $\log \rho(t)$ is monotone decreasing.
 In particular, when $t=0$ then 
$h_0=1$ and $\rho(0)=1$  and so this expression reduces to  $\frac{d \log \rho(t)}{dt}|_{t=0} = -\mu_0(\log |F'|) = -\lambda(\mu)$, as required.

 %follow \cite{Ru} and observe that since 
 %$$ 
 %\begin{aligned}
 %\rho(t) &= \lim_{n \to +\infty} \frac{1}{n} \log \mathcal L_t^n (1)(0) \cr
  %&= \lim_{n \to +\infty} \frac{1}{n} \log
  %\sum_{i_1, \cdots, i_n \in \{1,2\}} 
  %\left( \sum_{i_1, \cdots, i_n \in \{1,2\}} |(T_{i_1} \circ \cdots \circ T_{i_n}    )'(0)|^t \right) 
  %\end{aligned}
 %$$
% we can write 
% $$ 
 %\begin{aligned}
 %\rho'(t) &= \lim_{n \to +\infty} \frac{1}{n} \frac{
 % \left( \sum_{i_1, \cdots, i_n \in \{1,2\}}
  %\log( |T_{i_1} \circ \cdots \circ T_{i_n}    )'(0)|).
  % |(T_{i_1} \circ \cdots \circ T_{i_n}    )'(0)|^t \right)}{ \left( \sum_{i_1, \cdots, i_n \in \{1,2\}} |(T_{i_1} \circ \cdots \circ T_{i_n}    )'(0)|^t \right)} < 0
 % \end{aligned}
 %$$
 The estimate on the second derivative follows the argument in (\cite{PP}, p.60-61) and gives
 $$
\frac{d^2 \log \rho(t)}{dt^2}  = \lim_{n \to +\infty}  \frac{1}{n} \int \left(
-\log |(T^n)'(x)| + \int \log |(T^n)'|d\mu_t
\right)^2 
d\mu_t \geq 0
$$
 This shows the function is convex.
 \end{proof}

\end{document}